\newtheorem{lemma}[subsection]{Lemma}
\newtheorem{proposition}[subsection]{Proposition}
\newtheorem{theorem}[subsection]{Theorem}
\newtheorem{corollary}[subsection]{Corollary}
\theoremstyle{definition}
\newtheorem{pg}[subsection]{}
\newtheorem{definition}[subsection]{Definition}
\newtheorem{remark}[subsection]{Remark}
\numberwithin{equation}{subsection}
\DeclareMathOperator{\acn}{acn}
\DeclareMathOperator{\APerf}{APerf}
\DeclareMathOperator{\bbE}{\mathbb{E}}
\DeclareMathOperator{\CAlg}{CAlg}
\DeclareMathOperator{\Cat}{\mathcal{C}at}
\newcommand{\Cech}{\v{C}ech\,}
\DeclareMathOperator{\cn}{cn}
\DeclareMathOperator{\colim}{colim} 
\DeclareMathOperator{\ex}{ex}
\DeclareMathOperator{\Ext}{Ext} 
\DeclareMathOperator{\Fun}{Fun}
\DeclareMathOperator{\LMod}{LMod}
\DeclareMathOperator{\Map}{Map}
\DeclareMathOperator{\Mod}{Mod}
\DeclareMathOperator{\op}{op}
\DeclareMathOperator{\Perf}{Perf}
\DeclareMathOperator{\QCoh}{QCoh}
\DeclareMathOperator{\Spec}{Spec}
\DeclareMathOperator{\SSet}{\mathcal{S}}
\DeclareMathOperator{\Sym}{Sym} 
\DeclareMathOperator{\Tot}{Tot}
\DeclareMathOperator{\calC}{\mathcal{C}}
\DeclareMathOperator{\calD}{\mathcal{D}}
\DeclareMathOperator{\sfX}{\mathsf{X}}
\newcommand{\Pull}[8]{\xymatrix{#1\ar[r]^-{#5}\ar[d]^-{#6} & #2 \ar[d]^-{#7} \\ #3 \ar[r]^-{#8} & #4}}
\title{Spectral Excision and Descent for Almost Perfect Complexes}
\author{Chang-Yeon Chough}
\address{Department of Mathematics, Sogang University, Seoul 04107, Republic of Korea}
\email{chough@sogang.ac.kr}
\keywords{almost perfect complexes, v-topology, Milnor squares, commutative ring spectra, infinity-categories}
\subjclass[2020]{14A30, 18F10, 19E08, 55P43, 18N60}
\begin{document}

\begin{abstract}
	We show that almost perfect complexes of commutative ring spectra satisfy excision and $v$-descent. These results generalize Milnor excision for perfect complexes of ordinary commutative rings and $v$-descent for almost perfect complexes of locally noetherian derived stacks by Halpern-Leistner and Preygel, respectively. 
\end{abstract}

\setcounter{tocdepth}{1} 
\maketitle
\tableofcontents

\section{Introduction}

\begin{pg}\label{perfect complexes in the case of affines}
	Suppose we are given a square of associative rings $\sigma$:
$$
\Pull{A}{A'}{B}{B'}{}{}{}{}
$$
for which the map $B\rightarrow B'$ is surjective. According to Milnor (see \cite[\textsection 2]{MR0349811}), the image of $\sigma$ under the functor which assigns to each associative ring $R$ the category of finitely generated projective $R$-modules is a pullback diagram of categories. To work in the more general context of structured ring spectra, recall that if $R$ is a commutative ring, then a chain complex of $R$-modules $M$ (viewed as an object of the derived category of $R$-modules) is \emph{perfect} if it is quasi-isomorphic to a bounded complex of finitely generated projective $R$-modules (see, for example, \cite[\href{https://stacks.math.columbia.edu/tag/0657}{Tag 0657}]{stacks-project}). More generally, we say that $M$ is \emph{pseudo-coherent} it if is quasi-isomorphic to a bounded above complex of finitely generated free $R$-modules (see, for example, \cite[p.79]{MR354655}). Suppose now that $R$ is an $\bbE_\infty$-ring in the sense of \cite[7.1.0.1]{lurie2017ha} and let $\Mod_R$ denote the $\infty$-category of $R$-modules (see \cite[7.1.1.2]{lurie2017ha}). Then the notions of perfect and pseudo-coherent modules over commutative rings can be generalized to the setting of $\bbE_\infty$-rings, and we obtain the notions of \emph{perfect} and \emph{almost perfect} modules over $R$, respectively (see \cite[7.2.4.1]{lurie2017ha} and \cite[7.2.4.10]{lurie2017ha}). We will denote by $\Perf(R)$ and $\APerf(R)$ the full subcategories of $\Mod_R$ spanned by the perfect and almost perfect $R$-modules, respectively. The constructions $R \mapsto \Perf(R), \APerf(R)$ determine functors $\Perf, \APerf: \CAlg^{\cn} \rightarrow \Cat_\infty$, where $\CAlg^{\cn}$ and $\Cat_\infty$ denote the $\infty$-category of connective $\bbE_\infty$-rings and the $\infty$-category of $\infty$-categories, respectively (see \cite[p.1201]{lurie2017ha} and \cite[3.0.0.1]{MR2522659}).
\end{pg}

\begin{pg}
	One of the main results in this paper is the following analogue of Milnor's result in the setting of $\bbE_\infty$-rings:
\end{pg}

\begin{theorem}\label{excision of commutative ring spectra} 
	Suppose we are given a pullback square of connective $\bbE_\infty$-rings $\sigma:$
$$
\Pull{A}{A'}{B}{B'.}{}{}{}{}
$$
If the induced map $\pi_0B \rightarrow \pi_0B'$ is a surjection of commutative rings, then the diagram of $\infty$-categories $\APerf(\sigma):$
$$
\Pull{\APerf(A)}{\APerf(A')}{\APerf(B)}{\APerf(B')}{}{}{}{}
$$
determined by the extension of scalars functors is a pullback square in the $\infty$-category $\Cat_\infty$.
\end{theorem}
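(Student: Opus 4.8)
The plan is to show that the functor
\[
F\colon\Perf(A)\longrightarrow\Perf(A')\times_{\Perf(B')}\Perf(B)
\]
induced by extension of scalars is an equivalence of $\infty$-categories; since the pullback of the diagram $\Perf(\sigma)$ in $\Cat_\infty$ is exactly this fibre product, that is precisely the assertion of the theorem. I would establish full faithfulness and essential surjectivity separately, and only the latter will use the surjectivity of $\pi_0 B\to\pi_0 B'$.

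For full faithfulness I would work with all modules. Regarded in $\Mod_A$, the square $\sigma$ is a pullback, hence --- $\Mod_A$ being stable --- a pushout; as $P\otimes_A(-)$ preserves pushouts for every $P\in\Mod_A$, the square $P\otimes_A\sigma$ is again a pushout, equivalently a pullback, so the canonical map $P\to(A'\otimes_AP)\times_{B'\otimes_AP}(B\otimes_AP)$ is an equivalence. This says that the extension-of-scalars functor $\widetilde F\colon\Mod_A\to\mathcal M:=\Mod_{A'}\times_{\Mod_{B'}}\Mod_B$ has invertible unit, i.e.\ is fully faithful; restricting along $\Perf(A)\subseteq\Mod_A$ gives full faithfulness of $F$. (Alternatively one could compute mapping spectra using that perfect modules are dualizable and that extension of scalars commutes with internal mapping objects between dualizables, but the pushout argument is shorter.)

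For essential surjectivity I would bring in the right adjoint $G$ of $\widetilde F$; a direct computation identifies it with the functor sending a triple $(W',W_B,\psi)$ to the pullback $W'\times_{B'\otimes_{A'}W'}W_B$ formed in $\Mod_A$. Since $\widetilde F$ preserves colimits (computed componentwise in $\mathcal M$) and is fully faithful, compactness passes across it in both directions. So, given $W=(W',W_B,\psi)$ with $W'\in\Perf(A')$ and $W_B\in\Perf(B)$ --- which is compact in $\mathcal M$ --- it suffices to show the counit $\widetilde FG(W)\to W$ is an equivalence: then $W\simeq\widetilde FG(W)$, compactness of $W$ forces $G(W)\in\Perf(A)$, and $W$ lies in the essential image of $F$. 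Writing $K$ for the fibre of the counit, the triangle identity and full faithfulness of $\widetilde F$ give $G(K)\simeq0$; and since $W'$, $W_B$ --- hence $G(W)$, hence both of its extensions of scalars --- have bounded-below underlying spectra (perfect modules over connective rings are bounded below), $K$ has bounded-below components. Thus the theorem reduces to: $G$ is conservative on objects of $\mathcal M$ with bounded-below components.

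To prove this, suppose $G(W)=W'\times_{B'\otimes_{A'}W'}W_B\simeq0$ in $\Mod_A$. Then, restricted to $A$, the unit maps $W'\to B'\otimes_{A'}W'$ and $W_B\to B'\otimes_BW_B$ are the two summand inclusions of a direct-sum decomposition of $B'\otimes_{A'}W'$; using that the cofibre of the unit $M\to S\otimes_RM$ of a base change $R\to S$ is $(\operatorname{fib}(R\to S)\otimes_RM)[1]$, one extracts equivalences of $A$-modules $W_B\simeq(J'\otimes_{A'}W')[1]$ and $W'\simeq(J\otimes_BW_B)[1]$, with $J=\operatorname{fib}(B\to B')$ and $J'=\operatorname{fib}(A'\to B')$. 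The hypothesis that $\pi_0B\to\pi_0B'$ is surjective is exactly the assertion that $J$ is connective, while $J'$ is automatically $(-1)$-connective; since the connectivity of a relative tensor product over a connective ring is at least the sum of the connectivities, the two equivalences force $\operatorname{conn}(W')\ge\operatorname{conn}(W_B)+1\ge\operatorname{conn}(W')+1$, impossible unless $W_B\simeq0$, whence also $W'\simeq0$. The step I expect to demand the most care is the bookkeeping in this last paragraph --- tracking the gluing datum $\psi$ so that the asserted cofibre identifications are the intended ones --- together with the reduction to bounded-below data in the preceding paragraph; full faithfulness and the formal properties of $\mathcal M$ and $\widetilde F$ are routine.
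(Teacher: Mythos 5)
Your proof is correct, but it follows a genuinely different route from the paper. The paper disposes of both halves by citation: full faithfulness of $\Mod_A \rightarrow \Mod_{A'}\times_{\Mod_{B'}}\Mod_B$ is quoted from the first half of \cite[16.2.0.2]{lurie2018sag}, the gluing of an $n$-connective triple to an $n$-connective $A$-module from the second half, and perfectness of the glued module is then detected via the universal descent morphism $A \rightarrow A'\times B$ together with Mathew's criterion \cite[3.28]{MR3459022}. You instead reprove the needed statements: full faithfulness comes from the observation that the cartesian (hence cocartesian) square of underlying $A$-modules stays (co)cartesian after applying $P\otimes_A(-)$, so the unit of the adjunction is invertible; essential surjectivity comes from showing the counit is an equivalence on objects with bounded-below components, which you reduce to conservativity of the right adjoint $G$ on such objects --- and this is exactly where the hypothesis enters, as connectivity of $\operatorname{fib}(B\rightarrow B')$, producing the connectivity contradiction $\operatorname{conn}(W')\geq\operatorname{conn}(W')+1$; finally you detect perfectness of $G(W)$ by transferring compactness along the fully faithful, colimit-preserving $\widetilde F$ rather than by universal descent. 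The trade-off: your argument is essentially self-contained (it in effect re-derives the bounded-below case of \cite[16.2.0.2]{lurie2018sag}, whose proof rests on the same connectivity mechanism, and it avoids the universal-descent apparatus entirely), at the cost of length; the paper's proof is a few lines but leans on substantial external results. The small assertions you leave implicit --- that filtered colimits in $\Mod_{A'}\times_{\Mod_{B'}}\Mod_B$ are computed componentwise, that a triple with perfect components is compact there (mapping spaces are finite limits of corepresentables by compacts), and the identification of $G$ with the pullback formula --- are all routine, and only the direction of compactness transfer you actually use is needed, so the looser phrase ``in both directions'' is harmless.
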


\begin{remark}
	In our proof of \ref{excision of commutative ring spectra}, we will make use of the notion of a \emph{universal descent morphism} of $\bbE_\infty$-rings in the sense of \cite[D.3.1.1]{lurie2018sag}, which was introduced originally by Akhil Mathew in \cite[3.18]{MR3459022}. We note that the notion of universal descent morphisms makes sense more generally for $\bbE_2$-rings. However, it is in the commutative setting that the class of universal descent morphisms has the descent property of \cite[D.3.5.8]{lurie2018sag}, which will play an important role in our discussion of almost perfect complexes. For this reason, we focus our attention to the case of $\bbE_\infty$-rings (unlike Milnor's excision, which works for associative rings).
\end{remark}

\begin{remark}\label{excision of commutative ring spectra under the stronger assumption}
	Under the additional assumption that the underlying map of commutative rings $\pi_0A' \rightarrow \pi_0B'$ is surjective, $\APerf(\sigma)$ is already known to be a pullback diagram by virtue of \cite[16.2.0.2]{lurie2018sag} and \cite[16.2.3.1]{lurie2018sag}. 
\end{remark}

\begin{remark}
	In the special case where $\sigma$ is a pullback square of connective $\bbE_\infty$-rings for which the map $\pi_0B \rightarrow \pi_0B'$ is a surjection whose kernel is a nilpotent ideal of $\pi_0B$, \ref{excision of commutative ring spectra} can be deduced from \cite[16.2.0.2]{lurie2018sag} and \cite[2.7.3.2]{lurie2018sag}. 
\end{remark}

\begin{remark}\label{excision of commutative ring spectra for perfect complexes}
	By restricting the equivalence of \ref{excision of commutative ring spectra} to the full subcategories spanned by the dualizable objects (see the proof of \ref{universal descent implies universal perf-descent}), we immediately deduce that the canonical map $\Perf(A) \rightarrow \Perf(A')\times_{\Perf(B')}\Perf(B)$ is an equivalence of $\infty$-categories. We remark that this equivalence is proven in \cite[2.23]{MR3459022} under the additional assumption that the map $\pi_0A' \rightarrow \pi_0B'$ is a surjection. 
\end{remark}

\begin{remark}
	According to \cite[1.17]{MR4024564} of Markus Land and Georg Tamme, $\Perf(\sigma)$ is a pullback diagram of $\infty$-categories if $\sigma$ is a pullback square of $\bbE_1$-rings for which the functor $\Perf(B\odot^{B'}_A A') \rightarrow \Perf(B')$ is conservative (see \cite[1.3]{MR4024564} for more details). In the case of connective $\bbE_\infty$-rings, the condition appearing in \cite[1.17]{MR4024564} is satisfied if the map $\pi_0B \rightarrow \pi_0B'$ is surjective (see \cite[p.912]{MR4024564} and \cite[1.3]{MR4024564}). Consequently, \ref{excision of commutative ring spectra for perfect complexes}, which is an immediate consequence of our more general result \ref{excision of commutative ring spectra}, can be deduced from \cite[1.17]{MR4024564}. We note that our proof of \ref{excision of commutative ring spectra for perfect complexes}, which is very succinct, does not involve the construction of the $\bbE_1$-ring $B\odot^{B'}_A A'$ appearing in the statement of \cite[1.17]{MR4024564}.
\end{remark}

\begin{pg}
	Fix a finite field $\mathbb{F}_p$ of $p$ elements for some prime number $p$. Recall that an $\mathbb{F}_p$-scheme $X$ is said to be \emph{perfect} if the Frobenius map $X \rightarrow X$ is an isomorphism (see, for example, \cite[3.1]{MR3674218}). This paper was motivated by \cite[11.2]{MR3674218} of Bhargav Bhatt and Peter Scholze, which shows that the functor which carries each perfect quasi-compact and quasi-separated $\mathbb{F}_p$-scheme $X$ to the $\infty$-category of perfect complexes on $X$ (see, for example, \cite[2.8.4.4]{lurie2018sag}) is a hypercomplete sheaf with respect to the $v$-topology of \cite[3.2]{MR3674218} in the sense of \cite[p.669]{MR2522659}. We note that this result depends crucially on the fact that the derived tensor product can be identified with the ordinary tensor product for perfect rings; see \cite[3.16]{MR3674218}. 
\end{pg}

\begin{pg}
	Our second main result shows that the restriction to perfect schemes can be removed from \cite[11.2]{MR3674218} if we work in the setting of spectral algebraic geometry (see \ref{v-topology of connective ring spectra} for the notion of \emph{$v$-cover} in the spectral setting). In fact, we prove this not just for perfect modules, but for more general almost perfect modules:
\end{pg}

\begin{theorem}\label{descent of aperf}
	Let $f:A \rightarrow B$ be a $v$-cover of connective $\bbE_\infty$-rings for which the underlying map of commutative rings $\pi_0f:\pi_0A \rightarrow \pi_0B$ is of finite presentation. Then $f$ is of universal $\APerf$-descent: that is, for every morphism $A \rightarrow A'$ in the $\infty$-category $\CAlg^{\cn}$ of connective $\bbE_\infty$-rings, the induced map $\APerf(A') \rightarrow \lim \APerf({B'}^\bullet)$ is an equivalence of $\infty$-categories, where ${B'}^\bullet$ denotes the \Cech nerve of the map $A' \rightarrow A'\otimes_AB$ (formed in the opposite of the $\infty$-category $\CAlg^{\cn}$).
\end{theorem}

\begin{remark} 
	If $f:A \rightarrow B$ is a morphism of noetherian simplicial commutative rings for which the induced map $\pi_0f$ is a finitely presented $v$-cover of ordinary commutative rings, it then follows from \cite[3.3.1]{MR4560539} of Halpern-Leistner and Preygel that almost perfect complexes satisfy (not necessarily universal) descent for the morphism $f$; see \ref{descent for morphisms and functors}. Our descent result \ref{descent of aperf} can be regarded as a generalization of their work (in the affine case): it holds without the locally noetherian assumption appearing in \cite[3.3.1]{MR4560539} and is valid more generally for connective $\bbE_\infty$-rings, rather than merely for simplicial commutative rings. Moreover, the $\APerf$-descent for the morphism $f$ is universal (that is, $\APerf$ satisfies descent for arbitrary base change of the map $f$). 
\end{remark}

\begin{remark}
	Let $f:A \rightarrow B$ be a morphism of noetherian connective $\bbE_\infty$-rings for which the induced map $\pi_0f$ is a finitely presented $v$-cover of ordinary commutative rings. Then \cite[3.3.6]{MR4560539} of Halpern-Leistner and Preygel shows that the functor $\Mod^{\acn}$ satisfies (not necessarily universal) descent for the map $f$ (see \ref{properties of modules}). Using a slight variant of the proof of \ref{descent of aperf}, we can remove the noetherian assumption on $A$ and $B$ to show that $f$ is of universal $\Mod^{\acn}$-descent; see \ref{descent of almost connective modules}. 
\end{remark}

\begin{remark}\label{perf does not satisfy hyperdescent} 
	As a consequence of \ref{descent of aperf}, we will see in \ref{descent of perf} that $f$ is also of universal $\Perf$-descent. In particular, the functor $\Perf: \CAlg^{\cn} \rightarrow \Cat_\infty$ satisfies descent with respect to the Grothendieck topology on $(\CAlg^{\cn})^{\op}$ which is characterized by the following property: a sieve $\calC \subseteq (\CAlg^{\cn})^{\op}_{/A}$ is a covering if and only if it contains a finite collection of maps $\{A \rightarrow A_i \}_{1 \leq i \leq n}$ for which the induced map $f:A \rightarrow \prod A_i $ is a $v$-cover such that the underlying map of commutative rings $\pi_0f$ exhibits $\pi_0(\prod A_i )$ as a finitely presented $\pi_0A$-algebra. In contrast with \cite[11.2]{MR3674218} for ordinary perfect schemes, the sheaf $\Perf$ is not hypercomplete (that is, it is not true in general that if $U_\bullet: \Delta^{\op} \rightarrow (\CAlg^{\cn})^{\op}$ is a hypercovering with respect to the above Grothendieck topology in the sense of \cite[A.5.7.1]{lurie2018sag}, then the composition $\Delta \stackrel{U_\bullet}{\longrightarrow} \CAlg^{\cn} \stackrel{\Perf}{\longrightarrow} \Cat_\infty$ is a limit diagram). To see this, we note that for every connective $\bbE_\infty$-ring $R$, the truncation map $R \rightarrow \pi_0R$ is a $v$-cover, so that the constant cosimplicial $\bbE_\infty$-ring with value $\pi_0R$ is a hypercovering of $R$ with respect to the above Grothendieck topology. Consequently, if the functor $\Perf$ is hypercomplete, then the natural map $\Perf(R) \rightarrow \Perf(\pi_0R)$ is an equivalence of $\infty$-categories, which is false in general.
\end{remark}

\begin{remark}
	Fix integers $a \leq b$. Since the functor $\Perf$ is a sheaf for the Grothendieck topology appearing in \ref{perf does not satisfy hyperdescent} by virtue of \ref{descent of perf}, so is the subfunctor $\Perf^{[a,b]}$ which assigns to each connective $\bbE_\infty$-ring $R$ the full subcategory $\Perf^{[a,b]}(R) \subseteq \Perf(R)$ spanned by those perfect complexes whose Tor-amplitude is contained in $[a, b]$; see, for example, \cite[7.2.4.21]{lurie2017ha}. As Akhil Mathew pointed out, the functor $\Perf^{[a,b]}: \CAlg^{\cn} \rightarrow \Cat_\infty$ does not take values in the $\infty$-category of $(b-a+1)$-categories in the sense of \cite[2.3.4.1]{MR2522659}, unlike in the case of ordinary perfect schemes of \cite[11.2]{MR3674218}. Consequently, the fact that $\Perf^{[a,b]}$ is a sheaf does not guarantee that it is a hypercomplete sheaf (see \cite[6.5.2.9]{MR2522659}). In fact, for a connective $\bbE_\infty$-ring $R$, the canonical map $\Perf^{[a,b]}(R) \rightarrow \Perf^{[a,b]}(\pi_0R)$ is not an equivalence in general, so that $\Perf^{[a,b]}$ is not hypercomplete. 
\end{remark}

\begin{pg}\textbf{Conventions}.
	We will make use of the theory of $\infty$-categories and the theory of spectral algebraic geometry developed in \cite{MR2522659}, \cite{lurie2017ha}, and \cite{lurie2018sag}.
\end{pg}

\begin{pg}\textbf{Acknowledgements}. 
	The author is thankful to Bhargav Bhatt and Akhil Mathew for their helpful comments and suggestions. This work was supported by the National Research Foundation of Korea (NRF) grant funded by the Korean government (MSIT) (No. 2020R1A5A1016126) and the Sogang University Research Grant of 2023 (No. 202310024.01). 
\end{pg}

\section{The $v$-covers and universal descent morphisms of connective $\bbE_\infty$-rings}

\begin{pg}\label{v-topology for ordinary schemes}
	Let $f: X \rightarrow Y$ be a morphism of quasi-compact and quasi-separated schemes. Recall from \cite[2.1]{MR3674218} (see also \cite[2.9]{MR2679038}) that $f$ is said to be a \emph{$v$-cover} if, for every valuation ring $V$ and every morphism of schemes $\Spec V \rightarrow Y$, there exist an extension of valuation rings $V \rightarrow W$ (that is, an injective local homomorphism) and a morphism of schemes $\Spec W \rightarrow X$, which fit into a commutative diagram
$$
\xymatrix{
\Spec W \ar[r] \ar[d] & X \ar[d]^-{f}\\
\Spec V \ar[r] & Y.
}
$$
In the spectral setting, we define the \emph{$v$-cover} of connective $\bbE_\infty$-rings as follows:
\end{pg}

\begin{definition}\label{v-topology of connective ring spectra}
	Let $f: A \rightarrow B$ be a morphism of connective $\bbE_\infty$-rings (see \cite[7.1.0.1]{lurie2017ha}). We will say that $f$ is a \emph{$v$-cover} if the induced morphism $\Spec(\pi_0f): \Spec(\pi_0B) \rightarrow \Spec(\pi_0A)$ is a $v$-cover of ordinary schemes in the sense of \ref{v-topology for ordinary schemes}. 
\end{definition}

\begin{remark}
	Alternatively, we can define a \emph{$v$-cover} of affine spectral Deligne-Mumford stacks $\Spec(f): \Spec B \rightarrow \Spec A$ as in \ref{v-topology for ordinary schemes}, by virtue of the universal property of $0$-truncations of spectral Deligne-Mumford stacks (see \cite[1.4.6.3]{lurie2018sag}).
\end{remark}

\begin{pg}
	We now summarize some of the formal properties of \ref{v-topology of connective ring spectra} (which follow immediately from the case of ordinary schemes and \cite[7.2.1.23]{lurie2017ha}):
\end{pg}

\begin{lemma}\label{formal properties of v-covers}\noindent 
\begin{enumerate}[\normalfont(i)]
\item The collection of $v$-covers of connective $\bbE_\infty$-rings contains all equivalences and is stable under composition. 

\item Suppose we are given a pushout diagram of connective $\bbE_\infty$-rings
$$
\Pull{A}{B}{A'}{B'.}{f}{}{}{f'}
$$
If $f$ is a $v$-cover, then so is $f'$.
\end{enumerate}
\end{lemma}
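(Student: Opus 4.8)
The plan is to deduce both statements directly from the defining condition of \ref{v-topology of connective ring spectra} together with the corresponding, well-known stability properties of $v$-covers of quasi-compact, quasi-separated schemes (compare \cite[\textsection 2]{MR3674218} and \cite{MR2679038}). The one general observation that legitimizes this reduction is that the affine scheme $\Spec R$ attached to any commutative ring $R$ is automatically quasi-compact and quasi-separated, so that \ref{v-topology for ordinary schemes} applies to each morphism $\Spec(\pi_0 f)$ that appears.

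For assertion (i): if $f$ is an equivalence of connective $\bbE_\infty$-rings, then $\pi_0 f$ is an isomorphism of commutative rings, hence $\Spec(\pi_0 f)$ is an isomorphism of schemes and in particular a $v$-cover. If $f: A \rightarrow B$ and $g: B \rightarrow C$ are $v$-covers, then, since $\pi_0$ is a functor and $\Spec$ preserves composites, the morphism $\Spec(\pi_0(g \circ f))$ is the composite $\Spec(\pi_0 g) \circ \Spec(\pi_0 f)$ of two $v$-covers of schemes, and is therefore a $v$-cover; thus $g \circ f$ is a $v$-cover of connective $\bbE_\infty$-rings.

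For assertion (ii): write $B' \simeq B \otimes_A A'$ for the pushout. By \cite[7.2.1.23]{lurie2017ha}, the canonical map identifies $\pi_0 B'$ with $\pi_0 B \otimes_{\pi_0 A} \pi_0 A'$, that is, with the pushout of $\pi_0 B \leftarrow \pi_0 A \rightarrow \pi_0 A'$ in the category of commutative rings. Applying $\Spec$, which carries pushouts of commutative rings to pullbacks of affine schemes, we obtain a pullback square of quasi-compact, quasi-separated schemes
$$
\Pull{\Spec(\pi_0B')}{\Spec(\pi_0B)}{\Spec(\pi_0A')}{\Spec(\pi_0A)}{}{\Spec(\pi_0f')}{\Spec(\pi_0f)}{}
$$
in which $\Spec(\pi_0 f')$ is the base change of $\Spec(\pi_0 f)$ along the lower horizontal map. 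Since $\Spec(\pi_0 f)$ is a $v$-cover by hypothesis and the class of $v$-covers of schemes is stable under base change, $\Spec(\pi_0 f')$ is a $v$-cover; that is, $f'$ is a $v$-cover of connective $\bbE_\infty$-rings. Every step is formal, so I do not expect a genuine obstacle: the only points worth noting are the identification $\pi_0(B \otimes_A A') \simeq \pi_0 B \otimes_{\pi_0 A} \pi_0 A'$ supplied by \cite[7.2.1.23]{lurie2017ha} and the fact that affine schemes are automatically quasi-compact and quasi-separated, so that the scheme-theoretic notion of $v$-cover is available throughout.
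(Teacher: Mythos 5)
Your proposal is correct and is exactly the argument the paper intends: the lemma is stated as following "immediately from the case of ordinary schemes and \cite[7.2.1.23]{lurie2017ha}", i.e.\ reduce via the definition to the known stability of $v$-covers of (qcqs) schemes under isomorphism, composition, and base change, with the identification $\pi_0(B\otimes_AA')\simeq \pi_0B\otimes_{\pi_0A}\pi_0A'$ as the only non-formal input. Nothing to add.
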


\begin{pg}\label{universal descent morphisms}
	The notion of $v$-covers appearing in \ref{v-topology of connective ring spectra} is closely related to the notion of a \emph{universal descent morphism} of $\bbE_\infty$-rings (which makes sense more generally for $\bbE_2$-rings): recall from \cite[D.3.1.1]{lurie2018sag} that a morphism of $\bbE_\infty$-rings $f:A \rightarrow B$ is said to be a \emph{universal descent morphism} if the smallest stable subcategory of $\LMod_A$ which is closed under retracts and contains all $A$-modules of the form $N\otimes_AB$, where $N$ is a left $A$-module, coincides with $\LMod_A$. We note that for a finitely presented map $f: A\rightarrow B$ of noetherian commutative rings, the map $f$ (regarded as a morphism of discrete $\bbE_\infty$-rings) is a universal descent morphism if and only if $\Spec (f): \Spec B \rightarrow \Spec A$ is a $v$-cover of ordinary schemes; see \cite[11.26]{MR3674218}. 
\end{pg}

\begin{remark}\label{mod-descent for universal descent morphisms}
	The class of universal descent morphisms, which will play a central role in our proofs of \ref{excision of commutative ring spectra} and \ref{descent of aperf}, allows us to do descent theory in the spectral setting. In particular, if $f:A \rightarrow B$ is a universal descent morphism of $\bbE_\infty$-rings, it then follows from \cite[D.3.5.8]{lurie2018sag} that the canonical map $\Mod_A \rightarrow \lim \Mod_{B^\bullet}$ is an equivalence of symmetric monoidal $\infty$-categories, where $B^\bullet$ denotes the \Cech nerve of $f$ formed in the $\infty$-category $(\CAlg^{\cn})^{\op}$. 
\end{remark}

\begin{remark}
	Let $f: A \rightarrow B$ be a morphism of $\bbE_\infty$-rings. Let $B^\bullet$ denote the \Cech nerve of $f$ (so that $B^n$ is given by the $(n+1)$-fold tensor product $B\otimes_A \cdots \otimes_AB$ for each $n \geq 0$). We denote by $\Tot^\bullet(B/A)$ the Tot-tower of $B^\bullet$: that is, the sequence of $A$-modules 
$$
\cdots \Tot^2(B/A) \rightarrow \Tot^1(B/A) \rightarrow \Tot^0(B/A).
$$ 
Here $\Tot^n(B/A)$ denotes the limit of the diagram $\{B^m\}$ taken over the full subcategory $\Delta_{\leq n} \subseteq \Delta$ spanned by those objects $[m] \in \Delta$ such that $m \leq n$. We note that if $f: A \rightarrow B$ is a universal descent morphism, then $A$ is a retract of $\Tot^n(B/A)$ for some integer $n \geq 0$ (see the proof of \cite[D.3.2.1]{lurie2018sag}). We will use this fact to deduce some descending properties of universal descent morphisms; see \ref{descending properties of modules over universal descent morphisms}.
\end{remark}

\begin{pg}\label{properties of modules}
	Let $R$ be a connective $\bbE_\infty$-ring. Recall that an $R$-module $M$ is said to be \emph{almost connective} if it is $(-m)$-connective for some integer $m \gg 0$ (see \cite[p.1201]{lurie2017ha}). We will let $\Mod_R^{\acn} \subseteq \Mod_R$ denote the full subcategory spanned by the almost connective $R$-modules. We note that the construction $R \mapsto \Mod_R^{\acn}$ determines a functor $\Mod^{\acn}: \CAlg^{\cn} \rightarrow \Cat_\infty$. According to \cite[7.2.4.10]{lurie2017ha}, an $R$-module $M$ is \emph{almost perfect} if it is $m$-connective for some integer $m$ and is an almost compact object of the $\infty$-category $\Mod_R^{\geq m}$ of $m$-connective $R$-modules (that is, for every integer $n \geq 0$, $\tau_{\leq n}M$ is compact as an object of $\tau_{\leq n}\Mod_R^{\geq m}$). We will say that an $R$-module $M$ is \emph{perfect to order n} if, for every filtered diagram $\{N_\alpha \}$ of $0$-truncated $R$-modules, the canonical map $\colim \Ext^i_A(M, N_\alpha) \rightarrow \Ext^i_A(M, \colim N_\alpha)$ is bijective for $i<n$ and injective for $i=n$; see \cite[2.7.0.1]{lurie2018sag}. We remark that an $R$-module $M$ is almost perfect if and only if it is perfect to order $n$ for each integer $n$ (see \cite[2.7.0.2]{lurie2018sag}).
\end{pg}

\begin{pg}
	One of the main ingredients in our proofs of \ref{excision of commutative ring spectra} and \ref{descent of aperf} is the following descent result:
\end{pg}

\begin{proposition}\label{descending properties of modules over universal descent morphisms} 
	Let $f: A \rightarrow B$ be a universal descent morphism of connective $\bbE_\infty$-rings, let $M$ be an $A$-module, and let $n$ be an integer. Then:
\begin{enumerate}[\normalfont(i)]
\item The $A$-module $M$ is $n$-connective if and only if the $B$-module $M \otimes_AB$ is $n$-connective. 

\item The $A$-module $M$ is almost connective if and only if the $B$-module $M \otimes_AB$ is almost connective.

\item The $A$-module $M$ is perfect to order $n$ if and only if the $B$-module $M \otimes_AB$ is perfect to order $n$. 

\item The $A$-module $M$ is almost perfect if and only if the $B$-module $M \otimes_AB$ is almost perfect. 
\end{enumerate}
\end{proposition}

\begin{proof}
	Assertions (ii) and (iv) follow immediately from (i) and (iii), respectively. The ``only if'' directions of assertions (i) and (iii) follow from \cite[7.2.1.23]{lurie2017ha} and \cite[2.7.3.1]{lurie2018sag}, respectively. To prove ``if'' directions, we note that since $f$ is a universal descent morphism, it follows from \cite[D.3.2.1]{lurie2018sag} that $A$ is a retract of $\Tot^m(B/A)$ for some $m \geq 0$. To prove the ``if'' direction of (i), assume that $M\otimes_A B$ is $n$-connective. In particular, $M \otimes_A \Tot^m(B/A)$ is $n$-connective when viewed as a $B$-module. Since $M$ is a retract of the connective $A$-module $M \otimes_A \Tot^m(B/A)$, $M$ is $n$-connective as desired. To complete the proof of (iii), let $\{N_\alpha \}$ be a filtered diagram of $0$-truncated $A$-modules. We wish to show that the canonical map $\phi_i: \colim \Ext^i_A(M, N_\alpha) \rightarrow \Ext^i_A(M, \colim N_\alpha)$ is bijective for $i<n$ and injective for $i=n$. For this, it will suffice to show that $\phi_i$ is a retract of a bijection for $i<n$ and an injection for $i=n$. Since $A$ is a retract of $\Tot^m(B/A)$, we deduce that $N_\alpha$ is a retract of $\tau_{\leq 0}(N_\alpha \otimes_A \Tot^m(B/A))$. We are therefore reduced to proving that the horizontal map in the diagram
$$
\xymatrix{
\colim \Ext^i_A(M, \tau_{\leq 0}(N_\alpha \otimes_A \Tot^m(B/A))) \ar[r] \ar[d] & \Ext^i_A(M, \colim \tau_{\leq 0}(N_\alpha \otimes_A \Tot^m(B/A))) \ar[d] \\
\colim \Ext^i_B(M\otimes_AB, \tau_{\leq 0}(N_\alpha \otimes_A \Tot^m(B/A))) \ar[r] & \Ext^i_B(M\otimes_AB, \colim \tau_{\leq 0}(N_\alpha \otimes_A \Tot^m(B/A)))
}
$$
is bijective for $i<n$ and injective for $i=n$, where we regard $N_\alpha \otimes_A \Tot^m(B/A)$ as a $B$-module. Since the forgetful functor $\Mod_B \rightarrow \Mod_A$ commutes with the formation of $0$-truncations and preserves small colimits (see \cite[4.2.3.7]{lurie2017ha}), the vertical maps are equivalences. Consequently, the desired result follows from our assumption that $M\otimes_AB$ is perfect to order $n$ as a module over $B$. 
\end{proof}

\begin{pg}
	Before stating our next result which will be needed in our proof of \ref{descent of aperf}, we recall a bit of terminology (see \cite[3.1.1]{lz2012enhanced}):
\end{pg}

\begin{definition}\label{descent for morphisms and functors}
	Let $\calC$ be an $\infty$-category which admits fiber products, and let $\calD$ be an $\infty$-category. Let $F: \calC^{\op} \rightarrow \calD$ be a functor. We will say that a morphism $f:C_0 \rightarrow C$ in $\calC$ is \emph{of $F$-descent} if the composition $\Delta_+ \stackrel{C_\bullet}{\longrightarrow} \calC^{\op} \stackrel{F}{\longrightarrow} \calD$ is a limit diagram, where $C_\bullet$ denotes the \Cech nerve of $f$, regarded as an augmented simplicial object of $\calC$ with $C_{-1}\simeq C$ (see \cite[p.543]{MR2522659}). That is, the canonical map $F(C) \rightarrow \lim\limits_{[n]\in \Delta^{\op}} F(C_n)$ is an equivalence in $\calD$. In this case, we will also say that $F$ \emph{satisfies descent for $f$}. If this condition holds for every base change $C'\times_CC_0 \rightarrow C'$ of $f$ along a morphism $C' \rightarrow C$, we will say that $f$ is \emph{of universal $F$-descent}, or that $F$ \emph{satisfies universal descent for $f$}.
\end{definition}

\begin{theorem}\label{universal descent implies universal aperf-descent}
	Let $f:A \rightarrow B$ be a universal descent morphism of connective $\bbE_\infty$-rings. Then the map $f$ is of universal $\Mod^{\acn}$-descent (see \emph{\ref{properties of modules}}) and of universal $\APerf$-descent.
\end{theorem}

\begin{proof}
	The property of being a universal descent morphism of $\bbE_\infty$-rings is stable under pushouts (see \cite[D.3.1.6]{lurie2018sag}), so it will suffice to show that $f$ is of $\Mod^{\acn}$-descent and of $\APerf$-descent. Let $B^\bullet$ denote the \Cech nerve of $f$, so that the canonical map $\Mod_A \rightarrow \lim \Mod_{B^\bullet}$ is an equivalence of symmetric monoidal $\infty$-categories (see \ref{mod-descent for universal descent morphisms}). Using \ref{descending properties of modules over universal descent morphisms}, we deduce the desired results by restricting to the full subcategories spanned by the almost connective and almost perfect modules, respectively.
\end{proof}

\begin{corollary}\label{universal descent implies universal perf-descent}
	Let $f:A \rightarrow B$ be a universal descent morphism of connective $\bbE_\infty$-rings. Then $f$ is of universal $\Perf$-descent.
\end{corollary}

\begin{proof}
	As in the proof of \ref{universal descent implies universal aperf-descent}, it will suffice to show that $f$ is of $\Perf$-descent. Using \ref{universal descent implies universal aperf-descent}, we obtain an equivalence of symmetric monoidal $\infty$-categories $\APerf(A) \rightarrow \lim \APerf(B^\bullet)$. Then the desired result follows by restricting to the full subcategories spanned by the dualizable objects (see \cite[4.6.1.7]{lurie2017ha}), because the full subcategory of $\lim \APerf(B^\bullet)$ spanned by the dualizable objects can be identified with $\lim \Perf(B^\bullet)$ by virtue of \cite[4.6.1.11]{lurie2017ha}.
\end{proof}

\begin{remark}
	In the situation of \ref{descending properties of modules over universal descent morphisms}, it follows from \ref{universal descent implies universal perf-descent} that the $A$-module $M$ is perfect if and only if the $B$-module $M \otimes_AB$ is perfect (see \cite[3.28]{MR3459022}).
\end{remark}

\begin{remark}\label{aperf-descent implies perf-descent}
	The proof of \ref{universal descent implies universal perf-descent} shows that every morphism which is either of universal $\Mod^{\acn}$-descent or of universal $\APerf$-descent is of universal $\Perf$-descent. 
\end{remark}

\begin{remark}
	The converse of \ref{universal descent implies universal perf-descent} is not necessarily true. For example, if $R=\Sym^\ast_\mathbb{Q}(\Sigma^2 \mathbb{Q})$ denotes the free $\bbE_\infty$-algebra over $\mathbb{Q}$ on a single generator $t$ of degree $2$, then we will see from \ref{descent of perf} that the truncation map $R\rightarrow \pi_0R$ is of universal $\Perf$-descent. However, the extension of scalars functor $\Mod_R \rightarrow \Mod_{\pi_0R}$ is not conservative (since the image of the localization $R[t^{-1}]$ vanishes), so that the truncation map is not a universal descent morphism (see \ref{mod-descent for universal descent morphisms}). 
\end{remark}

\section{Proof of Theorems \ref{excision of commutative ring spectra} and \ref{descent of aperf}}

\begin{pg}
	Using the notion of a universal descent morphism of \cite[D.3.1.1]{lurie2018sag} (see also \ref{universal descent morphisms}), we now provide a proof of \ref{excision of commutative ring spectra}:
\end{pg}

\begin{proof}[Proof of \emph{\ref{excision of commutative ring spectra}}]
	We wish to show that the canonical map $\APerf(A) \rightarrow \APerf(A')\times_{\APerf(B')}\APerf(B)$ is an equivalence of $\infty$-categories. The assertion that this map is fully faithful follows immediately from the first half of \cite[16.2.0.2]{lurie2018sag}, which supplies a fully faithful functor $\Mod_A \rightarrow \Mod_{A'}\times_{\Mod_{B'}}\Mod_B$. To prove the essential surjectivity, note that we can identify the objects of $\APerf(A')\times_{\APerf(B')}\APerf(B)$ with triples $(M',N, \alpha)$, where $M'$ is an almost perfect $A'$-module, $N$ is an almost perfect $B$-module, and $\alpha: M'\otimes_{A'}B' \rightarrow N\otimes_BB'$ is an equivalence of $B'$-modules. Since almost perfect modules are almost connective (that is, $m$-connective for some integer $m$), we may assume that $M'$ and $N$ are both $n$-connective for some integer $n$. Using the second part of \cite[16.2.0.2]{lurie2018sag}, we can choose an $n$-connective $A$-module $M$ such that $M\otimes_AA' \simeq M'$ and $M\otimes_AB \simeq N$ in $\Mod_{A'}$ and $\Mod_B$, respectively. To complete the proof, it will suffice to show that $M$ is almost perfect. Since the diagram $\sigma$ is a pullback square, the map $A \rightarrow A' \times B$ is a universal descent morphism in the sense of \cite[D.3.1.1]{lurie2018sag}, so that the desired result follows from \ref{descending properties of modules over universal descent morphisms}. 
\end{proof}

\begin{pg}
	The proof of \ref{descent of aperf} will require some preliminary results.
\end{pg}

\begin{lemma}\label{almost perfect complexes and universal nilcompleteness}
	Let $f:A \rightarrow B$ be morphism of connective $\bbE_\infty$-rings. Then the canonical map $\Mod_B^{\cn} \rightarrow \lim\limits_{n \geq 0} \Mod_{B\otimes_A\tau_{\leq n}A}^{\cn}$ is an equivalence of $\infty$-categories. Moreover, it restricts to equivalences of $\infty$-categories $\APerf(B) \rightarrow \lim\limits_{n \geq 0} \APerf(B\otimes_A\tau_{\leq n}A)$ and $\Perf(B) \rightarrow \lim\limits_{n \geq 0} \Perf(B\otimes_A\tau_{\leq n}A)$.
\end{lemma}

\begin{proof}
	We first note that there is an equivalence of $\infty$-categories $\Mod_A^{\cn} \rightarrow \lim\Mod_{\tau_{\leq n}A}^{\cn}$ (see \cite[2.5.9.3]{lurie2018sag}). Then \cite[10.2.2.3]{lurie2018sag} guarantees that the natural map 
$$
\Mod_B^{\cn} \simeq \Mod_B^{\cn}\otimes_{\Mod_A^{\cn}} (\lim \Mod_{\tau_{\leq n}A}^{\cn}) \rightarrow \lim (\Mod_B^{\cn}\otimes_{\Mod_A^{\cn}}\Mod_{\tau_{\leq n}A}^{\cn})
$$ 
is an equivalence of $\infty$-categories. Since the tensor product $\Mod_B^{\cn}\otimes_{\Mod_A^{\cn}}\Mod_{\tau_{\leq n}A}^{\cn}$ can be identified with $\Mod_{B\otimes_A\tau_{\leq n}A}^{\cn}$ (see \cite[10.2.1.7]{lurie2018sag}), we can identify $\Mod_B^{\cn}$ with the limit of the diagram $\{\Mod_{B\otimes_A\tau_{\leq n}A}^{\cn}\}$ in the $\infty$-category of (not necessarily small) $\infty$-categories $\widehat{\Cat}_\infty$ of \cite[3.0.0.5]{MR2522659}. The restrictions to almost perfect and perfect modules follow immediately from \cite[2.7.3.2]{lurie2018sag}. 
\end{proof}

\begin{pg}
	Since an almost connective module is $m$-connective for some integer $m$ (see \ref{properties of modules}), we immediately obtain the following:
\end{pg}

\begin{corollary}
	Let $f:A \rightarrow B$ be morphism of connective $\bbE_\infty$-rings. Then the canonical map $\Mod_B^{\acn} \rightarrow \lim\limits_{n \geq 0} \Mod_{B\otimes_A\tau_{\leq n}A}^{\acn}$ is an equivalence of $\infty$-categories. 
\end{corollary}

\begin{lemma}\label{square-zero extensions are universal descent morphisms}
	Let $f:A \rightarrow B$ be a map of $\bbE_\infty$-rings which exhibits $A$ as a square-zero extension of $B$ by a $B$-module $M$ in the sense of \emph{\cite[7.4.1.6]{lurie2017ha}}. Then $f$ is a universal descent morphism. 
\end{lemma}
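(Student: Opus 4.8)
The plan is to verify directly the defining condition of \cite[D.3.1.1]{lurie2018sag}: that the smallest stable, retract-closed subcategory $\mathcal{E} \subseteq \Mod_A$ containing every $A$-module of the form $N \otimes_A B$ coincides with $\Mod_A$. Writing $I := \mathrm{fib}(f)$, the heart of the argument is to show that $I$ is a \emph{square-zero} ideal, which will force $I$ to be a retract of a module induced from $B$; the rest is a short bootstrap.

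First I would recall from \cite[\textsection 7.4.1]{lurie2017ha} that, since $f$ exhibits $A$ as a square-zero extension of $B$ by $M$, there is a fiber sequence of $A$-modules $M \xrightarrow{\iota} A \xrightarrow{f} B$ in which $M$ is regarded as an $A$-module by restricting its $B$-module structure along $f$; in particular $I \simeq M$ as $A$-modules. The key claim is that the composite $I \otimes_A I \xrightarrow{\mathrm{id}_I \otimes_A \iota} I \otimes_A A \simeq I$ is nullhomotopic. To prove it, I would first apply $B \otimes_A (-)$ to the fiber sequence above: this identifies $B \otimes_A \iota \colon B \otimes_A I \to B \otimes_A A \simeq B$ with the canonical map $\mathrm{fib}(u) \to B$, where $u \colon B \to B \otimes_A B$ is a unit map, and since $u$ is split by the multiplication $B \otimes_A B \to B$ this map --- hence $B \otimes_A \iota$ --- is nullhomotopic. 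Now, because $I \simeq M$ carries an $A$-module structure restricted along $f$ from a $B$-module structure, the projection formula gives a natural equivalence $I \otimes_A (-) \simeq M \otimes_B (B \otimes_A (-))$ of functors valued in $\Mod_A$, under which $\mathrm{id}_I \otimes_A \iota$ corresponds to $\mathrm{id}_M \otimes_B (B \otimes_A \iota)$; the latter is null by the previous step, establishing the key claim.

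Granting this, I would finish as follows. Applying $I \otimes_A (-)$ to the fiber sequence $I \xrightarrow{\iota} A \xrightarrow{f} B$ yields a fiber sequence $I \otimes_A I \to I \to I \otimes_A B$ whose first arrow is null, so $I$ is a retract of $I \otimes_A B$; as $I \otimes_A B$ has the form $N \otimes_A B$ it lies in $\mathcal{E}$, and therefore so does $I$. More generally, for any $X \in \Mod_A$ the module $X \otimes_A I$ is a retract of $(X \otimes_A I) \otimes_A B \in \mathcal{E}$, hence lies in $\mathcal{E}$; combined with $X \otimes_A B \in \mathcal{E}$, the fiber sequence $X \otimes_A I \to X \to X \otimes_A B$ exhibits $X$ as the fiber of a morphism between objects of $\mathcal{E}$, so $X \in \mathcal{E}$. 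Thus $\mathcal{E} = \Mod_A$, i.e.\ $f$ is a universal descent morphism (equivalently, $B$ is descendable over $A$ in the sense of \cite[3.18]{MR3459022}).

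I expect the one real obstacle to be the key claim --- in particular, pinning down the $A$-module structure on $\mathrm{fib}(f)$ as the restriction of the given $B$-module structure on $M$ along $f$, and then matching $\mathrm{id}_I \otimes_A \iota$ with $\mathrm{id}_M \otimes_B (B \otimes_A \iota)$ through the projection formula. The remaining ingredients --- that a unit map $B \to B \otimes_A B$ is split by multiplication, and the thick-subcategory bookkeeping at the end --- are routine.
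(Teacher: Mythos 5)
Your proof is correct, but it is essentially an unwound (and slightly longer) version of the paper's argument rather than a different one. The paper simply quotes the defining pullback square $A\simeq B\times_{B\oplus\Sigma M}B$ of \cite[7.4.1.7]{lurie2017ha}: tensoring the associated fiber sequence $A\to B\times B\to B\oplus\Sigma M$ with an arbitrary $A$-module $X$ places $X$ in a fiber sequence whose other two terms are retracts of induced modules, which is exactly the criterion of \cite[D.3.1.1]{lurie2018sag}. You work instead from the fiber sequence $M\to A\xrightarrow{f}B$ (the rotation of that square) and prove a derived analogue of ``$I^2=0$'', namely that $I\otimes_AI\to I$ is null, in order to split $I$ off of $I\otimes_AB$. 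That step is correct --- including the identification of the $A$-module structure on $\mathrm{fib}(f)$ as the restriction along $f$ of the $B$-module structure on $M$; note the two projections $A\to B$ in the defining pullback square become homotopic after composing with $B\oplus\Sigma M\to B$, so there is no ambiguity about which map one restricts along --- but it is unnecessary: since $I\simeq M$ is restricted from a $B$-module, the unit $I\to I\otimes_AB$ is already split by the action map $B\otimes_AM\to M$, and tensoring that splitting with $X$ exhibits $X\otimes_AI$ as a retract of the induced module $(X\otimes_AI)\otimes_AB$ directly. Your concluding bootstrap is fine (strictly speaking $X$ is not the fiber in the sequence $X\otimes_AI\to X\to X\otimes_AB$; one rotates it, which is immaterial since $\mathcal{E}$ is stable), and it is precisely the thick-subcategory bookkeeping the paper leaves implicit. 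So both arguments prove the same thing by the same mechanism; yours is self-contained and makes the square-zero splitting explicit, the paper's is shorter because it cites the pullback square and the definition.
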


\begin{proof}
	We have a pullback diagram of $\bbE_\infty$-rings
$$
\Pull{A}{B}{B}{B\oplus \Sigma M,}{}{}{}{}
$$
where $\Sigma M$ denotes the suspension of $M$ (see \cite[7.4.1.7]{lurie2017ha}), so that the desired result follows from the definition of a universal descent morphism (see \cite[D.3.1.1]{lurie2018sag}); alternatively, the desired assertion can be deduced from \cite[11.20]{MR3674218}. 
\end{proof}

\begin{pg}
	We are now ready to give the proof of \ref{descent of aperf}. We will follow the strategy of Halpern-Leistner and Preygel in the proof of \cite[3.1.1]{MR4560539}, where square-zero extensions allow us to reduce to the case where $A$ is discrete.
\end{pg}

\begin{proof}[Proof of \emph{\ref{descent of aperf}}] 
	Since the collection of $v$-covers $f: A\rightarrow B$ between connective $\bbE_\infty$-rings for which the underlying map of commutative rings $\pi_0f:\pi_0A\rightarrow \pi_0B$ is of finite presentation is closed under pushouts (see \ref{formal properties of v-covers} and \cite[7.2.1.23]{lurie2017ha}), it will suffice to show that the map $f$ is of $\APerf$-descent (see \ref{descent for morphisms and functors}). Let $B^\bullet$ denote the \Cech nerve of $f$ as in \ref{mod-descent for universal descent morphisms}. We have a commutative diagram 
$$
\Pull{\APerf(A)}{\lim\limits_m \APerf(B^m)}{\lim\limits_n \APerf(\tau_{\leq n}A)}{\lim\limits_n \lim\limits_m \APerf(\tau_{\leq n}A\otimes_A B^m).}{}{}{}{}
$$
Using \ref{almost perfect complexes and universal nilcompleteness}, we see that the vertical maps are equivalences. Consequently, it will suffice to show that for each $n\geq 0$, the canonical map $\tau_{\leq n}A \rightarrow \tau_{\leq n}A\otimes_AB$ is of universal $\APerf$-descent. We proceed by induction on $n$. We begin with the case $n=0$. Note that the composition $\tau_{\leq 0}A \rightarrow \tau_{\leq 0}A\otimes_AB \rightarrow \pi_0(\tau_{\leq 0}A\otimes_AB)$, which can be identified with $\pi_0(f)$, is a finitely presented $v$-cover of ordinary commutative rings, so that it is a universal descent morphism by virtue of \cite[5.5]{MR4332074}. Using \ref{universal descent implies universal aperf-descent}, we deduce that it is of universal $\APerf$-descent. It then follows from \cite[3.1.2]{lz2012enhanced} that $\tau_{\leq 0}A \rightarrow \tau_{\leq 0}A\otimes_AB$ is of universal $\APerf$-descent. To carry out the inductive step, assume that the map $\tau_{\leq n-1}A \rightarrow \tau_{\leq n-1}A\otimes_AB$ is of universal $\APerf$-descent. Using \cite[3.1.2]{lz2012enhanced}, it will suffice to show that the composition $\tau_{\leq n}A \rightarrow \tau_{\leq n}A\otimes_AB \rightarrow \tau_{\leq n-1}A\otimes_AB$ is of universal $\APerf$-descent. Since the truncation map $\tau_{\leq n}A \rightarrow \tau_{\leq n-1}A$ exhibits $\tau_{\leq n}A$ as a square-zero extension of $\tau_{\leq n-1}A$ by $\Sigma^n\pi_nA$ (see \cite[7.4.1.28]{lurie2017ha}), it is of universal $\APerf$-descent by virtue of \ref{square-zero extensions are universal descent morphisms} and \ref{universal descent implies universal aperf-descent}. Combining this with the inductive hypothesis, the desired result follows from \cite[3.1.2]{lz2012enhanced} (which guarantees that the collection of universal $\APerf$-descent morphisms is closed under composition).
\end{proof}

\begin{remark}\label{descent of almost connective modules}
	Arguing as in the proof of \ref{descent of aperf} (using $\Mod^{\acn}$ in place of $\APerf$), we deduce that in the situation of \ref{descent of aperf}, $f$ is of universal $\Mod^{\acn}$-descent.
\end{remark}

\begin{corollary}\label{descent of perf}
	In the situation of \emph{\ref{descent of aperf}}, $f$ is of universal $\Perf$-descent.
\end{corollary}

\begin{proof}
	By virtue of \ref{aperf-descent implies perf-descent}, this is an immediate consequence of \ref{descent of aperf}.
\end{proof}

\begin{pg}
	Before recording some other consequences of \ref{descent of aperf}, let us introduce a bit of terminology:
\end{pg}

\begin{definition}
	Let $X:\CAlg^{\cn} \rightarrow \SSet$ be a functor, where $\SSet$ denotes the $\infty$-category of spaces (see \cite[1.2.16.1]{MR2522659}). Let $\QCoh(X)$ denote the \emph{$\infty$-category of quasi-coherent sheaves on $X$}; see \cite[6.2.2.1]{lurie2018sag}. We will say that an object $F\in \QCoh(X)$ is a \emph{perfect complex on $X$} if, for every connective $\bbE_\infty$-ring $R$ and every morphism $\eta: \Spec R \rightarrow X$, the pullback $\eta^\ast F \in \QCoh(\Spec R)\simeq \Mod_R$ belongs to the full subcategory $\Perf(R) \subseteq \Mod_R$ (see \ref{perfect complexes in the case of affines}). We let $\Perf(X)$ denote the full subcategory of $\QCoh(X)$ spanned by the perfect complexes on $X$. 
\end{definition}

\begin{remark}
	In the special case where $X$ is representable by an affine spectral Deligne-Mumford stack $\Spec A$, the full subcategory $\Perf(X) \subseteq \QCoh(X)$ corresponds to the full subcategory $\Perf(A) \subseteq \Mod_A$ under the equivalence of $\infty$-categories $\QCoh(X) \simeq \Mod_A$. 
\end{remark}

\begin{remark}
	We note that $\QCoh(X)$ admits a symmetric monoidal structure of \cite[p.497]{lurie2018sag}. According to \cite[6.2.6.2]{lurie2018sag}, an object $F\in \QCoh(X)$ is dualizable as an object of the symmetric monoidal $\infty$-category $\QCoh(X)$ if and only if it belongs to $\Perf(X)$. 
\end{remark}

\begin{pg}
	The $v$-topology on schemes is not subcanonical, but it is when restricted to quasi-compact and quasi-separated perfect schemes; see \cite[4.2]{MR3674218}. In the spectral setting, we have the following consequence of \ref{descent of aperf}:
\end{pg}

\begin{corollary}\label{qcqs spectral algebraic spaces satisfy finitely presented v-descent}
	Let $\sfX$ be a quasi-compact and quasi-separated spectral algebraic space of \emph{\cite[1.6.8.1]{lurie2018sag}}, and let $X: \CAlg^{\cn} \rightarrow \SSet$ denote the functor represented by $\sfX$ in the sense of \emph{\cite[1.6.4.1]{lurie2018sag}}. Let $f:A \rightarrow B$ be a $v$-cover of connective $\bbE_\infty$-rings for which the underlying map of commutative rings $\pi_0f:\pi_0A \rightarrow \pi_0B$ exhibits $\pi_0B$ as a finitely presented algebra over $\pi_0A$. Then $X$ satisfies universal descent for $f$.
\end{corollary}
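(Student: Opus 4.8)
The plan is to deduce the corollary from Theorem~\ref{descent of perf} by means of Tannaka duality, exploiting the fact that a quasi-compact, quasi-separated spectral algebraic space is determined, together with its functor of points, by its symmetric monoidal $\infty$-category of perfect complexes.

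I would first pass to a non-universal statement. Since the class of $v$-covers $g\colon R\to S$ of connective $\bbE_\infty$-rings for which $\pi_0 g$ is of finite presentation is stable under pushout (by \ref{formal properties of v-covers} and \cite[7.2.1.23]{lurie2017ha}), and since this class is manifestly stable under base change, it suffices to show that $f$ itself is of $X$-descent; that is, writing $B^\bullet$ for the \Cech nerve of $f$ formed in $(\CAlg^{\cn})^{\op}$, that the canonical map $X(A)\to\lim_{[n]\in\Delta}X(B^n)$ is an equivalence of spaces (see \ref{descent for morphisms and functors}). The case in which $\sfX$ is affine, say $\sfX\simeq\Spec C$ with $C$ a connective $\bbE_\infty$-ring, is the conceptual core and is immediate: the equivalence $\Perf(A)\simeq\lim_\Delta\Perf(B^\bullet)$ furnished by Theorem~\ref{descent of perf} is induced by extension-of-scalars functors, hence is symmetric monoidal, so it induces an equivalence on the $\bbE_\infty$-rings of endomorphisms of the unit objects; this identifies $A$ with $\lim_\Delta B^n$ in $\CAlg^{\cn}$, and applying the limit-preserving functor $\Map_{\CAlg^{\cn}}(C,-)$ then yields $X(A)\simeq\lim_\Delta X(B^\bullet)$.

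For general $\sfX$, I would invoke Tannaka duality. Since $\sfX$ is quasi-compact and quasi-separated, the symmetric monoidal $\infty$-category $\QCoh(\sfX)$ is compactly generated with compact objects precisely the perfect complexes, so $\QCoh(\sfX)\simeq\mathrm{Ind}(\Perf(\sfX))$; combining this with the identification $\Mod_R\simeq\mathrm{Ind}(\Perf(R))$, valid for every connective $\bbE_\infty$-ring $R$ (see \cite[7.2.4]{lurie2017ha}), Tannaka duality for $\sfX$ (see \cite[Chapter 9]{lurie2018sag}) identifies $X(R)=\Map(\Spec R,\sfX)$, naturally in $R$, with the full subspace of the space of symmetric monoidal functors $\Perf(\sfX)\to\Perf(R)$ spanned by those functors whose $\mathrm{Ind}$-extension $\QCoh(\sfX)\to\Mod_R$ carries connective objects to connective objects and flat objects to flat objects. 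Now Theorem~\ref{descent of perf} shows that the equivalence $\Perf(A)\simeq\lim_\Delta\Perf(B^\bullet)$ upgrades to an equivalence in the $\infty$-category of symmetric monoidal $\infty$-categories, and, since the space of symmetric monoidal functors out of $\Perf(\sfX)$ preserves limits, we obtain that the space of symmetric monoidal functors $\Perf(\sfX)\to\Perf(A)$ is the limit over $\Delta$ of the corresponding spaces for the $B^n$. It then remains to check that this identification restricts to one between $X(A)$ and $\lim_\Delta X(B^\bullet)$, i.e. that a symmetric monoidal functor $F\colon\Perf(\sfX)\to\Perf(A)$ carries connective objects to connective objects, and flat objects to flat objects, precisely when the composite of $F$ with the extension of scalars $\Perf(A)\to\Perf(B)$ does. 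This last point is where I expect the real work to be. One implication is formal, since tensoring over $A$ with the connective $\bbE_\infty$-$A$-algebra $B$ preserves both connectivity and flatness; the other uses the hypothesis that $f$ is a $v$-cover, hence that $\Spec(\pi_0 B)\to\Spec(\pi_0 A)$ is surjective, so that the connectivity and the Tor-amplitude of a perfect $A$-module $P$ may be tested after base change to the residue fields of $\pi_0 A$, each of which embeds into a residue field of $\pi_0 B$, reducing the claim to the assumed behaviour of $P\otimes_A B$. (A more direct reduction of the general case to the affine one, via an affine étale atlas of $\sfX$, seems harder to make work, since the functor $\Map(\Spec R,-)$ does not commute with the colimit presenting $\sfX$ from such an atlas.)
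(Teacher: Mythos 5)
Your overall strategy is the right one, and the first half of your argument agrees with the paper: the reduction to the non-universal statement via stability of finitely presented $v$-covers under pushout is exactly what the paper does, and the idea of converting $X$-descent into a statement about symmetric monoidal functors out of $\Perf(\sfX)$, then feeding in Theorem \ref{descent of perf} together with the fact that mapping spaces out of a fixed symmetric monoidal $\infty$-category preserve limits, is the paper's argument as well. (Your separate treatment of the affine case, via endomorphisms of the unit, is correct but redundant once the Tannakian step is in place.)

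The gap is in the last step, which you yourself flag as ``the real work.'' You invoke the Tannaka duality for quasi-geometric stacks, which describes $X(R)$ inside the space of symmetric monoidal functors $\Perf(\sfX)\to\Perf(R)$ by the condition that the $\mathrm{Ind}$-extension $\QCoh(\sfX)\to\Mod_R$ preserve \emph{all} connective objects and \emph{all} flat objects, and you then propose to check that these conditions can be tested after base change along $f$ by reducing, via surjectivity of $\Spec(\pi_0B)\to\Spec(\pi_0A)$, to residue fields. But the residue-field criterion for connectivity and Tor-amplitude is a statement about \emph{perfect} $A$-modules, whereas the objects you must control are the images under the $\mathrm{Ind}$-extension of arbitrary connective (resp.\ flat) quasi-coherent sheaves on $\sfX$, which are typically not perfect: for a general $A$-module, connectivity is not detected at residue fields (over $\mathbb{Z}$, the module $\Sigma^{-1}(\mathbb{Q}/\mathbb{Z})$ becomes connective after base change to every residue field), and flatness of an arbitrary (non-finitely presented) module is likewise not known to descend along a $v$-cover. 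One could try to repair this by approximating connective sheaves on $\sfX$ by connective perfect complexes, but no such approximation is available for the flatness condition (that would amount to a resolution property), so the argument as written does not close. The paper sidesteps the entire issue by using the sharper Tannaka duality for perfect complexes on quasi-compact, quasi-separated spectral algebraic spaces \cite[9.6.4.2]{lurie2018sag}, which asserts that $\Map(\Spec R, X)\to\Fun^\otimes_{\ex}(\Perf(X),\Perf(R))$ is already an equivalence with no connectivity or flatness conditions; with that input your limit argument immediately finishes the proof, and this is the form in which you should cast the Tannakian step.
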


\begin{proof}
	As in the proof of \ref{descent of aperf}, we are reduced to proving that the functor $X$ satisfies descent for the map $f$. Let $B^\bullet$ denote \Cech nerve of $f$. We have a commutative diagram
$$
\xymatrix{
\Map_{\Fun(\CAlg^{\cn}, \SSet)}(\Spec A, X) \ar[r] \ar[d] & \lim\limits_m \Map_{\Fun(\CAlg^{\cn}, \SSet)}(\Spec B^m, X) \ar[d] \\
\Fun^\otimes_{\ex}(\Perf(X), \Perf(A)) \ar[r] & \lim\limits_m \Fun^\otimes_{\ex}(\Perf(X), \Perf(B^m)),
}
$$
where $\Fun^\otimes_{\ex}(\Perf(X), \Perf(A))$ denotes the $\infty$-category of exact symmetric monoidal functors from $\Perf(X)$ to $\Perf(A)$ and $\Fun^\otimes_{\ex}(\Perf(X), \Perf(B^\bullet))$ is defined similarly (see \cite[2.1.3.7]{lurie2017ha}). We wish to show that the upper horizontal map is an equivalence. It follows from \cite[9.6.4.2]{lurie2018sag} that the vertical maps are equivalences. The desired result now follows from \ref{descent of perf}, which guarantees that the bottom horizontal map is an equivalence.
\end{proof}

\begin{pg}
	Arguing as in the proof of \ref{qcqs spectral algebraic spaces satisfy finitely presented v-descent} (using $\APerf$ and \cite[9.5.5.1]{lurie2018sag} in place of $\Perf$ and \cite[9.6.4.2]{lurie2018sag}, respectively), we obtain another consequence of \ref{descent of aperf}:
\end{pg}

\begin{corollary}
	Let $X$ be a locally noetherian geometric stack of \emph{\cite[9.3.0.1]{lurie2018sag} and \cite[9.5.1.1]{lurie2018sag}}. Let $f:A \rightarrow B$ be a $v$-cover of connective $\bbE_\infty$-rings for which the underlying map of commutative rings $\pi_0f:\pi_0A \rightarrow \pi_0B$ exhibits $\pi_0B$ as a finitely presented algebra over $\pi_0A$. Then $X$ satisfies universal descent for the morphism $f$.
\end{corollary}

\bibliography{chough_aperf}
\bibliographystyle{amsplain}

\end{document}